\newtheorem{theorem}{Theorem}
\newtheorem{corollary}[theorem]{Corollary}
\newtheorem{example}[theorem]{Example}
\newtheorem{remark}[theorem]{Remark}
\newenvironment{proof}[1][Proof]{\textbf{#1.} }{\ \rule{0.5em}{0.5em}}
\begin{document}

\author{Marek Galewski \\
Faculty of Mathematics and Computer Science, \\
University of Lodz,\\
Banacha 22, 90-238 Lodz, Poland, \\
galewski@math.uni.lodz.pl}
\title{Dependence on parameters for discrete second order boundary value
problems}
\maketitle

\begin{abstract}
We investigate the dependence on parameters for second order difference
equations with two point boundary value conditions by using a variational
method in case when the corresponding Euler action functional is coercive.
Some applications for discrete Emden-Fowler equation are also given.
\end{abstract}

\textbf{MSC Subject Classification:}\textit{\ 34B16, 39M10}

\textbf{Keywords:}\textit{\ variational method; second order discrete
equation; coercivity; dependence on parameters; positive solution; discrete
Emden-Fowler equation.}

\textbf{Note:} The final version of this submission will be published in
Journal of Difference Equations and Applications

\section{Introduction}

The variational approach towards the existence of solutions to nonlinear
difference equations received some serious attention, see for example, \cite%
{gueFIRST}, \cite{agrawal}, \cite{caiYu}, \cite{Li}, \cite{sehlik}, \cite%
{YanZhangAPMacc}. Various types of methods have so far been employed, in
fact the approaches valid for boundary value problems for differential
equations have successfully been adapted and somehow extended due to the
fact that in the setting of difference equations the boundary value problems
are considered in a finite dimensional space.

In this work we mainly intend to investigate the dependence on a functional
parameter $u$ for the coercive second order boundary value problems taking
as an example the problem originally considered in \cite{Li} by variational
method and in \cite{rach} by the lower-upper function method. Later such
boundary value problem has been reconsidered in a variational formulation in 
\cite{galewskiCOERCIVE} with weaker assumptions than those of \cite{Li}.
However, both the approach of \cite{Li} and the topological method from \cite%
{rach} yield the same existence result - with the same assumptions as is
shown in \cite{galewskiCOERCIVE} - it is the variational method which, in
our opinion, allows for considering the dependence of the solution on a
parameter in some systematic way.

The approach towards investigation of a dependence on a functional parameter
for solutions of ODE in case of coercive action functional originates for
example from \cite{LedzewiczWalczak}. We also base on some of ideas from 
\cite{LedzewiczWalczak} but we put them in a different context and for a
discrete problem. Such an investigation has not been undertaken yet to the
best of our knowledge. The idea of the continuous dependence on parameters
could be summarized as follows: we consider a discrete boundary value
problem which is subject to certain (functional) parameter and which has a
solution with respect to any parameter (function). Therefore corresponding
to a sequence of parameters there exists a sequence of solutions. Supposing
that the sequence of parameters is convergent (in a suitable sense) we
arrive at the limit of a sequence of solutions, which itself is a solution
to the considered problem corresponding to the limit of the parameter
sequence. What is important and what constitutes the main point is that all
solutions, both in the sequence and the limit one, share the same properties.

\section{Dependence on parameters for second order coercive problem}

Before we provide the statement of the problem under consideration, we
introduce some notation. In what follows by $\left[ A,B\right] $ we mean the
discrete interval $\left\{ A,...,B\right\} $. $C\left( \left[ A,B\right]
,R\right) $ is a space of functions $u:\left[ A,B\right] \rightarrow R$
(defined on a discrete interval, and thus necessarily continuous) equipped
with classical maximum norm $\left\| u\right\| _{C}=\max_{k\in \left\{
A,...,B\right\} }\left| u\left( k\right) \right| $.

Let $M>0$ be fixed. A parameter function $u$ belongs to 
\begin{equation*}
L_{M}=\left\{ u\in C\left( \left[ 1,T\right] ,R\right) :\left\Vert
u\right\Vert _{C}\leq M\right\} .
\end{equation*}

$\Delta $ denotes the forward difference operator, i.e. $\Delta x\left(
k\right) =x\left( k+1\right) -x\left( k\right) $. $E$ stands for the space
of functions $y:\left[ 0,T+1\right] \rightarrow R$ such that $y\left(
0\right) =y\left( T+1\right) =0$ considered with norm $\left\Vert
y\right\Vert =\sqrt{\sum_{k=1}^{T}\left( \Delta y\left( k\right) \right) ^{2}%
}.$ By $\left\vert \cdot \right\vert $ we denote Euclidean norm on $E$ and
we see that 
\begin{equation}
\gamma \left\vert y\right\vert \leq \left\Vert y\right\Vert \leq \gamma
_{1}\left\vert y\right\vert \text{ for all\thinspace }y\in E  \label{poin}
\end{equation}
for certain constants $\gamma ,\gamma _{1}>0$ which do not depend on $y$.
\bigskip

In this section we will investigate the following problem in $E$ 
\begin{equation}
\Delta \left( p\left( k\right) \Delta x\left( k-1\right) \right) +f\left(
k,x\left( k\right) ,u\left( k\right) \right) =g\left( k\right)  \label{row}
\end{equation}
which is subject to a parameter $u\in L_{M}$ and which satisfies the
Dirichlet boundary conditions 
\begin{equation}
x\left( 0\right) =x\left( T+1\right) =0.  \label{bd}
\end{equation}
We will assume that$\bigskip $

\textbf{A1} $f\in C\left( \left[ 1,T\right] \times R\times \left[ -M,M\right]
,R\right) ,$ $p\in C\left( \left[ 1,T+1\right] ,R\right) ,$ $g\in C\left( %
\left[ 1,T\right] ,R\right) ;\bigskip $

\textbf{A2} there exists $\alpha >0$ such that $yf\left( k,y,u\right) \leq 0$
for all $\left\vert y\right\vert \geq \alpha $, $\left\vert u\right\vert
\leq M$ and $k=1,...,T;\bigskip $

\textbf{A3} $m=\min_{k\in \left\{ 1,...,T+1\right\} }p\left( k\right)
>0.\bigskip $

Here $f\in C\left( \left[ 1,T\right] \times R\times \left[ -M,M\right]
,R\right) $ means that for each $k\in \left\{ 1,2,...,T\right\} $ function $%
f\left( k,\cdot ,\cdot \right) $ is continuous on $R\times \left[ -M,M\right]
$. Let for $y\in E$ 
\begin{equation*}
F\left( k,y\left( k\right) ,u\left( k\right) \right) =\int_{0}^{y\left(
k\right) }f\left( k,t,u\left( k\right) \right) dt.
\end{equation*}
With assumptions \textbf{A1}-\textbf{A3} the action functional $%
J:E\rightarrow R$ corresponding to (\ref{row})-(\ref{bd}) with a fixed
function $u\in L_{M}$ reads 
\begin{equation}
J_{u}\left( y\right) =\sum_{k=1}^{T+1}\left[ \frac{p\left( k\right) }{2}%
\Delta y^{2}\left( k-1\right) \right] -\sum_{k=1}^{T}F\left( k,y\left(
k\right) ,u\left( k\right) \right) +\sum_{k=1}^{T}g\left( k\right) y\left(
k\right)  \label{gwiazdka}
\end{equation}
and it is coercive and continuous on $E$. Since it is obviously
differentiable in the sense of G\^{a}teaux with bounded G\^{a}teaux
variation at each point, it admits at least one minimizer satisfying (\ref%
{row})-(\ref{bd}), see \cite{galewskiCOERCIVE}, \cite{Li} for details.
Namely, for any fixed $u\in L_{M}$ the set which consists of the arguments
of a minimum to $J_{u}$ 
\begin{equation*}
V_{u}=\left\{ x\in E:J_{u}\left( x\right) =\inf_{v\in E}J_{u}\left( v\right) 
\text{ and }\frac{d}{dx}J_{u}\left( x\right) =0\right\}
\end{equation*}
is non-empty. We will investigate the behavior of the sequence $\left\{
x_{n}\right\} _{n=1}^{\infty }$ of solutions to (\ref{row})-(\ref{bd})
depending on the convergence of the sequence of parameters $\left\{
u_{n}\right\} _{n=1}^{\infty }$. Moreover, we consider the case of the
existence and dependence on parameters for positive solutions. Next, we
investigate some general stability results in a sense which we describe
later. In fact the dependence on a parameter is obtained as a special case
of stability which we show by giving the alternative proof of the main
result, namely Theorem \ref{dep_param_theo}.

We would like to mention that typically with (\ref{row})-(\ref{bd}) it is
associated the following functional instead of (\ref{gwiazdka}) 
\begin{equation*}
J_{u}^{1}\left( y\right) =\sum_{k=1}^{T+1}\left[ \frac{p\left( k\right) }{2}%
\Delta y^{2}\left( k-1\right) -F\left( k,y\left( k\right) ,u\left( k\right)
\right) +g\left( k\right) y\left( k\right) \right] .
\end{equation*}
However, it requires that $g,$ $f\in C\left( \left[ 1,T+1\right] ,R\right) $%
. As in \cite{Li} we can show that (\ref{row})-(\ref{bd}) stands for
critical point to (\ref{gwiazdka}) as well.

\subsection{Dependence on parameters}

\begin{theorem}
\label{dep_param_theo}Assume \textbf{A1}-\textbf{A3}. For any fixed $u\in
L_{M}$ there exists at least one solution $x\in V_{u}$ to problem (\ref{row}%
)-(\ref{bd}). Let $\left\{ u_{n}\right\} _{n=1}^{\infty }\subset L_{M}$ be a
convergent sequence of parameters, where $\lim_{n\rightarrow \infty }u_{n}=%
\overline{u}\in L_{M}$. For any sequence $\left\{ x_{n}\right\}
_{n=1}^{\infty }$ of solutions $x_{n}\in V_{u_{n}}$ to the problem (\ref{row}%
)-(\ref{bd}) corresponding to $u_{n}$, there exist a subsequence $\left\{
x_{n_{i}}\right\} _{i=1}^{\infty }\subset E$ and an element $\overline{x}\in
E$ such that $\lim_{i\rightarrow \infty }x_{n_{i}}=\overline{x}$ and $J_{%
\overline{u}}\left( \overline{x}\right) =\inf_{y\in E}J_{\overline{u}}\left(
y\right) $. Moreover, $\overline{x}\in V_{\overline{u}},$ i.e. $\overline{x}$
satisfies 
\begin{equation*}
\Delta \left( p\left( k\right) \Delta \overline{x}\left( k-1\right) \right)
+f\left( k,\overline{x}\left( k\right) ,\overline{u}\left( k\right) \right)
=g\left( k\right) \text{, }\overline{x}\left( 0\right) =\overline{x}\left(
T+1\right) =0.
\end{equation*}
\end{theorem}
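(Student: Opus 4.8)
The plan is to run the direct method, exploiting that $E$ is finite-dimensional of dimension $T$ (so bounded sequences have convergent subsequences) together with a coercivity estimate that is \emph{uniform} over the parameter ball $L_{M}$. Existence of a minimizer for each fixed $u$ is already guaranteed by the coercivity and continuity of $J_{u}$ recorded just before the statement, so the work lies in the limiting argument for the sequence.

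First I would establish uniform boundedness of $\{x_{n}\}$. Since $x_{n}$ minimizes $J_{u_{n}}$ and $J_{u_{n}}(0)=0$ (every term in (\ref{gwiazdka}) vanishes at $y=0$), we have $J_{u_{n}}(x_{n})\leq 0$ for every $n$. Using \textbf{A3} the quadratic term is bounded below by $\tfrac{m}{2}\|x_{n}\|^{2}$; using \textbf{A2} together with the continuity in \textbf{A1} on the compact set $[1,T]\times[-\alpha,\alpha]\times[-M,M]$, the primitive $F(k,\cdot,\cdot)$ is bounded above by a constant $C_{1}$ independent of $u\in L_{M}$ (for $|s|\geq\alpha$ the sign condition on $f$ forces $F$ to be non-increasing in $|s|$), so $-\sum_{k}F\geq -TC_{1}$; and the linear term is estimated via Cauchy--Schwarz and (\ref{poin}) by $-\tfrac{1}{\gamma}|g|\,\|x_{n}\|$. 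Combining these gives $\tfrac{m}{2}\|x_{n}\|^{2}-C\|x_{n}\|-TC_{1}\leq J_{u_{n}}(x_{n})\leq 0$ with all constants independent of $n$, which bounds $\{\|x_{n}\|\}$ uniformly.

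Next, because $\dim E=T<\infty$, the Bolzano--Weierstrass theorem yields a subsequence $x_{n_{i}}\to\overline{x}$ in $E$. I would then transfer minimality to the limit. The map $(y,u)\mapsto J_{u}(y)$ is continuous, being a finite sum of terms each continuous in $(y,u)$: the quadratic and linear terms do not involve $u$, while $F(k,y(k),u(k))$ is jointly continuous by \textbf{A1}. Hence $J_{u_{n_{i}}}(x_{n_{i}})\to J_{\overline{u}}(\overline{x})$. For any fixed $y\in E$, minimality of $x_{n_{i}}$ gives $J_{u_{n_{i}}}(x_{n_{i}})\leq J_{u_{n_{i}}}(y)$; letting $i\to\infty$ and using continuity in $u$ on the right yields $J_{\overline{u}}(\overline{x})\leq J_{\overline{u}}(y)$. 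Since $y$ was arbitrary, $J_{\overline{u}}(\overline{x})=\inf_{y\in E}J_{\overline{u}}(y)$.

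Finally, since $\overline{x}$ minimizes the G\^{a}teaux-differentiable functional $J_{\overline{u}}$, its G\^{a}teaux derivative vanishes, and as recorded before the theorem the identity $\tfrac{d}{dx}J_{\overline{u}}(\overline{x})=0$ is precisely the difference equation (\ref{row}) with the boundary conditions (\ref{bd}) for the parameter $\overline{u}$; thus $\overline{x}\in V_{\overline{u}}$. The main obstacle is the first step: securing a coercivity bound \emph{uniform} in $u\in L_{M}$, which is exactly what \textbf{A2} and \textbf{A3} provide, namely a bound on $F$ from above that is uniform in the parameter and a uniform positive floor $m$ for $p$. Everything after that is the standard finite-dimensional direct-method argument.
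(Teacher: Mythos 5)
Your proposal is correct and follows essentially the same route as the paper: the same uniform bound on $\sum_{k}F(k,x_{n}(k),u_{n}(k))$ from \textbf{A2}, the same coercivity estimate via \textbf{A3}, (\ref{poin}) and $J_{u_{n}}(x_{n})\leq J_{u_{n}}(0)=0$, followed by Bolzano--Weierstrass and a passage to the limit based on joint continuity of $(y,u)\mapsto J_{u}(y)$. The only cosmetic difference is that you transfer minimality directly by letting $i\to\infty$ in $J_{u_{n_{i}}}(x_{n_{i}})\leq J_{u_{n_{i}}}(y)$ for arbitrary $y$, whereas the paper argues by contradiction against a fixed minimizer $x_{0}$ of $J_{\overline{u}}$ using the telescoping decomposition (\ref{granice}); both rest on the same continuity facts, and your version is if anything a little cleaner.
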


\begin{proof}
From \cite{galewskiCOERCIVE} it follows that for each $n=1,2,...$ there
exists a solution $x_{n}\in V_{u_{n}}$ to (\ref{row})-(\ref{bd}) . We see
that the sequence $\left\{ x_{n}\right\} _{n=1}^{\infty }$ is bounded.
Indeed, for any $n$ we have $x_{n}\in V_{u_{n}}\subset \left\{
x:J_{u_{n}}\left( x\right) \leq J_{u_{n}}\left( 0\right) \right\} $. By 
\textbf{A2} we further obtain for some $C>0$ and for $\ $all $x_{n}\in
V_{u_{n}}$ 
\begin{equation}
\begin{array}{l}
\sum_{k=1}^{T}F\left( k,x_{n}\left( k\right) ,u_{n}\left( k\right) \right)
=\sum_{k=1}^{T}\int_{0}^{x_{n}\left( k\right) }f\left( k,t,u_{n}\left(
k\right) \right) dt\leq \bigskip \\ 
\sum_{k=1}^{T}\int_{-\alpha }^{\alpha }\left| f\left( k,x_{n}\left( k\right)
,u_{n}\left( k\right) \right) \right| \leq C.%
\end{array}
\label{c}
\end{equation}
Next, by (\ref{c}) and by (\ref{poin}) we get 
\begin{equation}
\begin{array}{l}
J_{u_{n}}\left( x_{n}\right) =\sum_{k=1}^{T+1}\left[ \frac{p\left( k\right) 
}{2}\Delta x_{n}^{2}\left( k-1\right) \right] -\sum_{k=1}^{T}F\left(
k,x_{n}\left( k\right) ,u_{n}\left( k\right) \right) \bigskip \\ 
+\sum_{k=1}^{T}g\left( k\right) x_{n}\left( k\right) \geq \frac{m}{2}\left\|
x_{n}\right\| ^{2}-\sqrt{\sum_{k=1}^{T}g^{2}\left( k\right) }\left|
x_{n}\right| -C\geq \bigskip \\ 
\frac{m}{2}\left\| x_{n}\right\| ^{2}-\frac{1}{\gamma }\sqrt{%
\sum_{k=1}^{T}g^{2}\left( k\right) }\left\| x_{n}\right\| -C.%
\end{array}
\label{coercive}
\end{equation}
On the other hand we see by definition of $F$ that $-F\left( k,0,u_{n}\left(
k\right) \right) =0$, so 
\begin{equation*}
J_{u_{n}}\left( x_{n}\right) \leq J_{u_{n}}\left( 0\right) =0.
\end{equation*}
Thus 
\begin{equation}
\frac{m}{2}\left\| x_{n}\right\| ^{2}-\frac{1}{\gamma }\sqrt{%
\sum_{k=1}^{T}g^{2}\left( k\right) }\left\| x_{n}\right\| \leq C\text{. }
\label{nier_kwadrat}
\end{equation}
Since (\ref{nier_kwadrat}) treated as a quadratic inequality with variable $%
t=\left\| x_{n}\right\| $ has solutions in a bounded closed interval and
since $n$ was fixed arbitrarily, we see that $\left\{ x_{n}\right\}
_{n=1}^{\infty }$ is bounded in $E$. Hence, it has a convergent subsequence $%
\left\{ x_{n_{i}}\right\} _{i=1}^{\infty }$. We denote its limit by $%
\overline{x}$. (We note that in \cite{galewskiCOERCIVE} relation (\ref%
{coercive}) is used in demonstrating that the action functional is indeed
coercive. )$\bigskip $

Now we demonstrate that $\overline{x}$ satisfies (\ref{row})-(\ref{bd})
corresponding to $\overline{u}$. Firstly, we observe that there exists $%
x_{0}\in E$ such that $x_{0}$ solves (\ref{row})-(\ref{bd}) with $\overline{u%
}$ and $J_{\overline{u}}\left( x_{0}\right) =\inf_{y\in E}J_{\overline{u}%
}\left( y\right) $. We see that there are two possibilities: namely either $%
J_{\overline{u}}\left( x_{0}\right) <J_{\overline{u}}\left( \overline{x}%
\right) $ or $J_{\overline{u}}\left( x_{0}\right) =J_{\overline{u}}\left( 
\overline{x}\right) $. On the one hand we suppose that $J_{\overline{u}%
}\left( x_{0}\right) <J_{\overline{u}}\left( \overline{x}\right) $. Now
there exists a constant $\delta >0$ such that in fact 
\begin{equation}
J_{\overline{u}}\left( \overline{x}\right) -J_{\overline{u}}\left(
x_{0}\right) >\delta >0.  \label{beta}
\end{equation}
We investigate the convergence of the right hand side of the inequality 
\begin{equation}
\delta <\left( J_{u_{n_{i}}}\left( x_{n_{i}}\right) -J_{\overline{u}}\left(
x_{0}\right) \right) -\left( J_{u_{n_{i}}}\left( x_{n_{i}}\right)
-J_{u_{n_{i}}}\left( \overline{x}\right) \right) -\left( J_{u_{n_{i}}}\left( 
\overline{x}\right) -J_{\overline{u}}\left( \overline{x}\right) \right)
\label{granice}
\end{equation}
which is equivalent to (\ref{beta}). It is obvious, by continuity, that 
\begin{equation}
\lim_{i\rightarrow \infty }\left( J_{u_{n_{i}}}\left( \overline{x}\right)
-J_{\overline{u}}\left( \overline{x}\right) \right) =0\text{ and }%
\lim\nolimits_{i\rightarrow \infty }\left( J_{u_{n_{i}}}\left(
x_{n_{i}}\right) -J_{u_{n_{i}}}\left( \overline{x}\right) \right) =0.
\label{gr_pom}
\end{equation}
We also have 
\begin{equation}
\lim_{i\rightarrow \infty }\left( J_{u_{n_{i}}}\left( x_{0}\right) -J_{%
\overline{u}}\left( x_{0}\right) \right) =0.  \label{gw}
\end{equation}
Since $x_{n_{i}}$ minimizes $J_{u_{n_{i}}}$ over $E$ we see that $%
J_{u_{n_{i}}}\left( x_{n_{i}}\right) \leq J_{u_{n_{i}}}\left( x_{0}\right) $
for any $n_{i}$. Therefore, we get by (\ref{gw}) 
\begin{equation*}
\lim_{i\rightarrow \infty }\left( J_{u_{n_{i}}}\left( x_{n_{i}}\right) -J_{%
\overline{u}}\left( x_{0}\right) \right) \leq \lim_{i\rightarrow \infty
}\left( J_{u_{n_{i}}}\left( x_{0}\right) -J_{\overline{u}}\left(
x_{0}\right) \right) =0.
\end{equation*}
Now we obtain $\delta \leq 0$ in (\ref{granice}), which is a contradiction.
Thus $J_{\overline{u}}\left( \overline{x}\right) =\inf_{y\in E}J_{\overline{u%
}}\left( y\right) $ and since $J_{\overline{u}}$ is differentiable in the
sense of G\^{a}teaux we have $\overline{x}\in V_{\overline{u}}$. Hence $%
\overline{x}$ necessarily satisfies (\ref{row})-(\ref{bd}). On the other
hand, if we have $J_{\overline{u}}\left( x_{0}\right) =J_{\overline{u}%
}\left( \overline{x}\right) $ the result readily follows.
\end{proof}

\subsection{Case of positive solutions}

It remains to consider the question of the existence and the dependence on
parameters for positive solutions. The approach of \cite{rach} allows for
obtaining at least one positive solution to (\ref{row})-(\ref{bd}) with some
assumptions added to those leading to the existence result. In fact, the
same holds true for the variational formulation although with modified
assumptions. We must add some assumption to \textbf{A1}, \textbf{A2}, 
\textbf{A3} and modify \textbf{A4 }in assumptions \textbf{A1}, \textbf{A3, A4%
}. Namely, we assume that $\bigskip $

\textbf{A5 }$f\left( k,y,u\right) -g\left( k\right) \geq 0$ for all $k\in %
\left[ 1,T\right] $, all $y\in R$ and all $\left\vert u\right\vert \leq M;$
there exists $k_{1}\in \left[ 1,T\right] $ such that $f\left(
k_{1},y,u\right) -g\left( k_{1}\right) >0$ for all $y\in R$ and all $%
\left\vert u\right\vert \leq M;\bigskip $

\textbf{A6 }$\lim_{y\rightarrow \infty }\sum_{k=1}^{T}F\left( k,y,u\right)
=-\infty $ and $\lim_{y\rightarrow -\infty }\sum_{k=1}^{T}F\left(
k,y,u\right) =c\in R$ uniformly in $\left\vert u\right\vert \leq M.\bigskip $

\begin{remark}
Assumption \textbf{A6}\ is different from \textbf{A4}. Indeed, function $%
F\left( x\right) =-e^{x}$ satisfies \textbf{A6}\ and it does not satisfy 
\textbf{A4}\ while function $F\left( x\right) =-x^{l}$ for any even $l$
satisfies \textbf{A4}\ and it does not satisfy \textbf{A6}. Still both
assumptions \textbf{A4 }and \textbf{A6 }yield that functional $J_{u}$ is
coercive.
\end{remark}

We recall that by a positive solution to (\ref{row})-(\ref{bd}) we mean such
a function $x\in E$ which satisfies (\ref{row}) and which is such that $%
x\left( k\right) >0$ for $k\in \left[ 1,T\right] $ with $x\left( 0\right)
=x\left( T+1\right) =0$. We have the following result concerning positive
solutions.

\begin{corollary}
Assume either \textbf{A1}, \textbf{A2}, \textbf{A3, A5} or \textbf{A1}, 
\textbf{A3}, \textbf{A5, A6.} For any fixed $u\in L_{M}$ there exists at
least one solution $x\in V_{u},$ $x\left( k\right) >0$ for $k\in \left[ 1,T%
\right] ,$ to problem (\ref{row})-(\ref{bd}) such that $J_{u}\left( x\right)
=\inf_{y\in E}J_{u}\left( y\right) $. Let $\left\{ u_{n}\right\}
_{n=1}^{\infty }\subset L_{M}$ be a convergent sequence of parameters, where 
$\lim_{n\rightarrow \infty }u_{n}=\overline{u}\in L_{M}$. For any sequence $%
\left\{ x_{n}\right\} _{n=1}^{\infty }$ of positive solutions $x_{n}\in
V_{u_{n}}$ to the problem (\ref{row})-(\ref{bd}) corresponding to $u_{n}$,
there exist a subsequence $\left\{ x_{n_{i}}\right\} _{i=1}^{\infty }\subset
E$ and an element $\overline{x}\in E$ such that $\lim_{i\rightarrow \infty
}x_{n_{i}}=\overline{x}$ and $J_{\overline{u}}\left( \overline{x}\right)
=\inf_{y\in E}J_{\overline{u}}\left( y\right) $. Moreover, $\overline{x}>0$
and $\overline{x}$ satisfies (\ref{row})-(\ref{bd}) with $\overline{u}$.
\end{corollary}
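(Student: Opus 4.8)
The plan is to decouple the statement into the existence of a positive minimizer for fixed $u$ and the passage to the limit, the latter reusing the machinery of Theorem \ref{dep_param_theo} almost verbatim and adding only the positivity bookkeeping. For fixed $u\in L_M$, under either hypothesis set the functional $J_u$ is continuous and coercive on the finite dimensional space $E$ (coercivity coming from \textbf{A2} exactly as in (\ref{coercive}), or from \textbf{A6} as recorded in the Remark), so it attains its infimum at some $x\in V_u$, which then solves (\ref{row})-(\ref{bd}) because $J_u$ is G\^{a}teaux differentiable. This produces a minimizer; the new content is showing it is positive.

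To establish positivity I would rewrite the Euler--Lagrange equation as $\Delta\left(p(k)\Delta x(k-1)\right)=g(k)-f(k,x(k),u(k))$ and invoke \textbf{A5}, which gives $\Delta\left(p(k)\Delta x(k-1)\right)\leq 0$ for all $k\in[1,T]$ with strict inequality at $k=k_1$. Writing $w(k)=p(k)\Delta x(k-1)$, this says $w$ is non-increasing and strictly decreasing across $k_1$, so $w$ is not identically zero; combined with $\sum_{k=1}^{T+1}w(k)/p(k)=x(T+1)-x(0)=0$ and $p>0$ from \textbf{A3}, the positive weights force $w$ to start strictly positive and end strictly negative. Since the sign of $\Delta x(k-1)$ equals the sign of $w(k)$, the function $x$ is first strictly increasing and then strictly decreasing with $x(0)=x(T+1)=0$, whence $x(k)>0$ for every interior $k\in[1,T]$. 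This is the discrete maximum principle underlying the whole corollary.

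For the dependence on parameters, given $u_n\to\overline{u}$ and positive solutions $x_n\in V_{u_n}$, I would first show $\{x_n\}$ is bounded in $E$: under the \textbf{A2} branch this is the estimate (\ref{c})--(\ref{nier_kwadrat}) of Theorem \ref{dep_param_theo}, while under the \textbf{A6} branch it follows from $J_{u_n}(x_n)\leq J_{u_n}(0)=0$ together with the coercivity of $J_{u_n}$, which is uniform in $|u|\leq M$ thanks to the uniformity built into \textbf{A6}. Passing to a convergent subsequence $x_{n_i}\to\overline{x}$ and repeating the contradiction argument of Theorem \ref{dep_param_theo} (the continuity limits (\ref{gr_pom}), (\ref{gw}) and the minimizing property $J_{u_{n_i}}(x_{n_i})\leq J_{u_{n_i}}(x_0)$) gives $J_{\overline{u}}(\overline{x})=\inf_{y\in E}J_{\overline{u}}(y)$, hence $\overline{x}\in V_{\overline{u}}$ and $\overline{x}$ solves (\ref{row})-(\ref{bd}) with $\overline{u}$. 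For positivity of $\overline{x}$ I would not attempt to pass $x_n(k)>0$ through the limit, since that only yields $\overline{x}\geq 0$; instead I would apply the maximum principle argument of the second paragraph directly to $\overline{x}$, noting that \textbf{A5} holds for $\overline{u}$ because $|\overline{u}|\leq M$, which gives $\overline{x}(k)>0$ on $[1,T]$.

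The main obstacle I anticipate is the positivity step, namely making the discrete maximum principle fully rigorous: one must argue carefully that superharmonicity in the sense $\Delta\left(p(k)\Delta x(k-1)\right)\leq 0$ with homogeneous Dirichlet data and a single point of strict inequality upgrades non-negativity to strict interior positivity, handling possible plateaus where $w$ vanishes. A secondary technical point is verifying that the boundedness of $\{x_n\}$ is genuinely uniform in $n$ in the \textbf{A6} case, where the \textbf{A2}-based bound (\ref{c}) is unavailable and one must extract a uniform bound from the uniform coercivity instead.
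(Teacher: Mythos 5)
Your proposal is correct and follows the same overall decomposition as the paper: existence for fixed $u$ from coercivity, positivity of every solution via \textbf{A5} and a maximum principle, and the limit passage recycled from Theorem \ref{dep_param_theo}. The one genuine difference is the positivity step. The paper disposes of it by citing a strong comparison principle (Lemma 2.3 of the Agarwal--Perera--O'Regan reference) as a black box, concluding that a solution of $-\Delta\left(p(k)\Delta x(k-1)\right)\geq 0$ with zero boundary data is either strictly positive on $\left[1,T\right]$ or identically zero, and then ruling out $x\equiv 0$ via the strict inequality at $k_{1}$ in \textbf{A5}. You instead prove the needed maximum principle by hand: setting $w(k)=p(k)\Delta x(k-1)$, the equation makes $w$ non-increasing with a strict drop at $k_{1}$, the telescoping identity $\sum_{k=1}^{T+1}w(k)/p(k)=0$ with positive weights forces $w(1)>0$ and $w(T+1)<0$, and the resulting increase-then-decrease profile of $x$ yields interior positivity. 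This argument is sound (the plateau issue you flag is handled by noting that $x(k_{0})=0$ at an interior point would force $x\equiv 0$ on $\left[k_{0},T+1\right]$ and hence $w(T+1)=0$, a contradiction), it is self-contained where the paper relies on a citation, and it incidentally repairs the paper's dichotomy, which is misprinted as ``either $x(k)\geq 0$ or $x(k)=0$'' yet is used in its strict form. You are also more careful than the paper on a secondary point: under the \textbf{A6} branch the boundedness of $\left\{x_{n}\right\}$ cannot come from estimate (\ref{c}), and your appeal to the uniformity in $\left\vert u\right\vert\leq M$ built into \textbf{A6} is the right substitute, whereas the paper passes over this silently.
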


\begin{proof}
Since in both cases solutions exist, we need to prove only that the
solutions to (\ref{row})-(\ref{bd}) under either \textbf{A1}, \textbf{A2}, 
\textbf{A3, A5} or \textbf{A1}, \textbf{A3}, \textbf{A5, A6} are positive.
We rewrite (\ref{row}) as follows 
\begin{equation*}
-\Delta \left( p\left( k\right) \Delta x\left( k-1\right) \right) =f\left(
k,x\left( k\right) ,u\left( k\right) \right) -g\left( k\right)
\end{equation*}
and observe that by \textbf{A5} we have $-\Delta \left( p\left( k\right)
\Delta x\left( k-1\right) \right) \geq 0$. Thus the strong comparison
principle, Lemma 2.3 from \cite{agrawal}, shows that either $x\left(
k\right) \geq 0$ for $k\in \left[ 1,T\right] $ or $x\left( k\right) =0$ for $%
k\in \left[ 1,T\right] $. Since $f\left( k_{1},x\left( k\right) ,u\left(
k\right) \right) -g\left( k_{1}\right) \neq 0$ for certain $k_{1}$ we cannot
have $x=0$. Thus, we see that $x\left( k\right) >0$ for $k\in \left[ 1,T%
\right] $.
\end{proof}

We note that neither in \cite{Li} nor in \cite{galewskiCOERCIVE} positive
solutions are considered. However, in \cite{rach} by the lower-upper
function method the authors obtain the existence of positive solutions for
system (\ref{row})-(\ref{bd}) without a parameter with \textbf{A1}, \textbf{%
A3} and with assumptions that $g\left( k\right) <0$, $f\left( t,0\right)
\geq 0$ for $t\in \left[ 1,T\right] $ (replacing \textbf{A5}) and that there
exists $\alpha >0$ such that $f\left( k,y\right) \leq 0$ for all $y\geq
\alpha $ and $k=1,...,T$ (replacing A\textbf{2}).

\section{Applications for the discrete Emden-Fowler equation\label{EF}}

Now we turn to sketching some further possible applications of our results.
As an example we shall consider the discrete version of the Emden-Fowler
equation investigated with the aid of critical point theory in \cite{HeWe}.
Following the authors of \cite{HeWe} we consider\ (in $R^{T}$ with classical
Euclidean norm) the discrete equation 
\begin{equation}
\Delta \left( p\left( k-1\right) \Delta x\left( k-1\right) \right) +q\left(
k\right) x\left( k\right) +f\left( k,x\left( k\right) ,u\left( k\right)
\right) =g\left( k\right)  \label{emden}
\end{equation}
subject to a parameter $u\in L_{M}$ and with boundary conditions 
\begin{equation}
x\left( 0\right) =x\left( T\right) \text{, }p\left( 0\right) \Delta x\left(
0\right) =p\left( T\right) \Delta x\left( T\right) \text{.}  \label{emden-bd}
\end{equation}
It is assumed that\bigskip

\textbf{A7} $f\in C\left( \left[ 1,T\right] \times R\times \left[ -M,M\right]
,R\right) $, $p\in C\left( \left[ 1,T+1\right] ,R\right) ,$ $q,g\in C\left( %
\left[ 1,T\right] ,R\right) $; $g\left( k_{1}\right) \neq 0$ for certain $%
k_{1}\in \left[ 1,T\right] ;$\bigskip

\textbf{A8} there exists a constant $r\in \left( 1,2\right) $ such that 
\begin{equation}
\lim_{\left\vert y\right\vert \rightarrow \infty }\sup \frac{f\left(
k,y,u\right) }{\left\vert y\right\vert ^{r-1}}\leq 0  \label{inf}
\end{equation}%
uniformly for $u\in \left[ -M,M\right] $, $k\in \left[ 1,T\right] .$

\bigskip

Basing on ideas developed in the proof of Theorem \ref{dep_param_theo} we
formulate and prove the main result of this section. Let us denote 
\begin{equation*}
M=\left[ 
\begin{array}{cccccc}
p\left( 0\right) +p\left( 1\right) & -p\left( 1\right) & 0 & \ldots & 0 & 
-p\left( 0\right) \\ 
-p\left( 1\right) & p\left( 1\right) +p\left( 2\right) & -p\left( 2\right) & 
\ldots & 0 & 0 \\ 
0 & -p\left( 2\right) & p\left( 2\right) +p\left( 3\right) & \ldots & 0 & 0
\\ 
\vdots & \vdots & \vdots & \ddots & \vdots & \vdots \\ 
0 & 0 & 0 & \ldots & p\left( T-2\right) +p\left( T-1\right) & -p\left(
T-1\right) \\ 
-p\left( 0\right) & 0 & 0 & \ldots & -p\left( T-1\right) & p\left(
T-1\right) +p\left( 0\right)%
\end{array}
\right]
\end{equation*}
and 
\begin{equation*}
Q=\left[ 
\begin{array}{cccccc}
-q\left( 1\right) & 0 & 0 & \ldots & 0 & 0 \\ 
0 & -q\left( 2\right) & 0 & \ldots & 0 & 0 \\ 
0 & 0 & -q\left( 3\right) & \ldots & 0 & 0 \\ 
\vdots & \vdots & \vdots & \ddots & \vdots & \vdots \\ 
0 & 0 & 0 & \ldots & -q\left( T-1\right) & 0 \\ 
0 & 0 & 0 & \ldots & 0 & -q\left( T\right)%
\end{array}
\right]
\end{equation*}
For a fixed $u\in L_{M}$ we introduce the action functional for (\ref{emden}%
)-(\ref{emden-bd}) 
\begin{equation*}
J_{u}\left( x\right) =\frac{1}{2}\left\langle \left( M+Q\right)
x,x\right\rangle -\sum_{k=1}^{T}F\left( k,x\left( k\right) ,u\left( k\right)
\right) +\sum_{k=1}^{T}g\left( k\right) x\left( k\right) .
\end{equation*}
Next, we introduce the set of critical points of (\ref{emden})-(\ref%
{emden-bd}) 
\begin{equation*}
V_{u}=\left\{ x\in R^{T}:J_{u}\left( x\right) =\inf_{v\in R^{T}}J_{u}\left(
v\right) ,\text{ }\frac{d}{dx}J_{u}\left( x\right) =0\right\} .
\end{equation*}

\begin{theorem}
Assume \textbf{A7, A8} and that $M+Q$ is a positive definite matrix. For any
fixed $u\in L_{M}$ there exists at least one non trivial solution $x\in
V_{u} $ to problem (\ref{emden})-(\ref{emden-bd}). Let $\left\{
u_{n}\right\} _{n=1}^{\infty }\subset L_{M}$ be a convergent sequence of
parameters, where $\lim_{n\rightarrow \infty }u_{n}=\overline{u}\in L_{M}$.
For any sequence $\left\{ x_{n}\right\} _{n=1}^{\infty }$ of solutions $%
x_{n}\in V_{u_{n}}$ to the problem (\ref{emden})-(\ref{emden-bd})
corresponding to $u_{n}$, there exist a subsequence $\left\{
x_{n_{i}}\right\} _{i=1}^{\infty }\subset R^{T}$ and an element $\overline{x}%
\in R^{T}$ such that $\lim_{i\rightarrow \infty }x_{n_{i}}=\overline{x}$ and 
$\overline{x}\in V_{\overline{u}}$, i.e. $\overline{x}$ satisfies (\ref%
{emden})-(\ref{emden-bd}) with $\overline{u}$, 
\begin{equation*}
\Delta \left( p\left( k-1\right) \Delta \overline{x}\left( k-1\right)
\right) +q\left( k\right) \overline{x}\left( k\right) +f\left( k,\overline{x}%
\left( k\right) ,\overline{u}\left( k\right) \right) =g\left( k\right) ,
\end{equation*}%
\begin{equation*}
\overline{x}\left( 0\right) =\overline{x}\left( T\right) \text{, }p\left(
0\right) \Delta \overline{x}\left( 0\right) =p\left( T\right) \Delta 
\overline{x}\left( T\right) \text{.}
\end{equation*}
\end{theorem}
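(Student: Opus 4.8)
The plan is to transcribe the proof of Theorem \ref{dep_param_theo} into the finite-dimensional space $R^{T}$, the two structural ingredients there---the uniform lower bound on the quadratic part coming from \textbf{A3} and the control of $F$ coming from \textbf{A2}---being replaced here by the positive definiteness of $M+Q$ and by the subquadratic growth encoded in \textbf{A8}; since we already work in $R^{T}$ with the Euclidean norm, the Poincar\'{e}-type inequality (\ref{poin}) is not needed. First I would settle existence for a fixed $u\in L_{M}$. Positive definiteness gives a $\lambda >0$ with $\tfrac{1}{2}\left\langle \left( M+Q\right) x,x\right\rangle \geq \tfrac{\lambda }{2}\left\vert x\right\vert ^{2}$, while \textbf{A8} (with $r\in \left( 1,2\right) $) forces $F\left( k,\cdot ,u\right) =\int_{0}^{\cdot }f$ to grow strictly more slowly than $\left\vert y\right\vert ^{2}$, uniformly in $k$ and in $\left\vert u\right\vert \leq M$: one obtains, for every $\varepsilon >0$, a $C_{\varepsilon }$ with $\left\vert F\left( k,y,u\right) \right\vert \leq \tfrac{\varepsilon }{r}\left\vert y\right\vert ^{r}+C_{\varepsilon }\left( 1+\left\vert y\right\vert \right) $. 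Feeding these two estimates into $J_{u}$ and using Cauchy--Schwarz on the term $\sum g\left( k\right) x\left( k\right) $ shows that, once $\varepsilon $ is fixed small, the quadratic term dominates and $J_{u}$ is coercive; being continuous on $R^{T}$ it attains its infimum, and the minimizer is a critical point solving (\ref{emden})--(\ref{emden-bd}). Nontriviality follows from $g\left( k_{1}\right) \neq 0$ in \textbf{A7} as in \cite{HeWe}, since $x\equiv 0$ cannot satisfy (\ref{emden}) at $k=k_{1}$.

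Next I would extract a convergent subsequence, exactly as in (\ref{nier_kwadrat}). The decisive point is that the coercivity estimate above is \emph{uniform} in $u$, because \textbf{A8} holds uniformly for $\left\vert u\right\vert \leq M$; hence the constants involved do not depend on $n$. Since each $x_{n}\in V_{u_{n}}$ minimizes $J_{u_{n}}$ we have $J_{u_{n}}\left( x_{n}\right) \leq J_{u_{n}}\left( 0\right) =0$, and inserting this into the uniform lower bound yields an inequality in $t=\left\vert x_{n}\right\vert $ whose solution set is a fixed bounded interval, independent of $n$. Thus $\left\{ x_{n}\right\} $ is bounded in $R^{T}$ and, the space being finite-dimensional, a subsequence $\left\{ x_{n_{i}}\right\} $ converges to some $\overline{x}\in R^{T}$.

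Finally I would prove $\overline{x}\in V_{\overline{u}}$ by the contradiction device of Theorem \ref{dep_param_theo}. Let $x_{0}$ minimize $J_{\overline{u}}$ over $R^{T}$ (it exists by the first step). If $J_{\overline{u}}\left( \overline{x}\right) -J_{\overline{u}}\left( x_{0}\right) >\delta >0$, I would write $\delta $ by means of the same telescoping identity as in (\ref{granice}),
\begin{equation*}
\delta <\left( J_{u_{n_{i}}}\left( x_{n_{i}}\right) -J_{\overline{u}}\left( x_{0}\right) \right) -\left( J_{u_{n_{i}}}\left( x_{n_{i}}\right) -J_{u_{n_{i}}}\left( \overline{x}\right) \right) -\left( J_{u_{n_{i}}}\left( \overline{x}\right) -J_{\overline{u}}\left( \overline{x}\right) \right) ,
\end{equation*}
and pass to the limit. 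Since $u_{n}\rightarrow \overline{u}$ uniformly and $F\left( k,\cdot ,\cdot \right) $ is continuous, $J_{u_{n_{i}}}\rightarrow J_{\overline{u}}$ at $\overline{x}$, at $x_{0}$, and along $x_{n_{i}}\rightarrow \overline{x}$, so the last two brackets tend to $0$; together with the minimizing property $J_{u_{n_{i}}}\left( x_{n_{i}}\right) \leq J_{u_{n_{i}}}\left( x_{0}\right) $ the first bracket has nonpositive upper limit. This forces $\delta \leq 0$, a contradiction, so $J_{\overline{u}}\left( \overline{x}\right) =\inf_{y}J_{\overline{u}}\left( y\right) $; differentiability then gives $\overline{x}\in V_{\overline{u}}$, i.e. $\overline{x}$ solves (\ref{emden})--(\ref{emden-bd}) with $\overline{u}$.

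The step I expect to be the main obstacle is the uniform coercivity estimate: converting the growth condition \textbf{A8}, which is only a $\limsup $ statement, into a bound on $F\left( k,y,u\right) $ valid simultaneously for all $k$, all $\left\vert u\right\vert \leq M$ and all $y\in R$ (in particular controlling $F$ on the whole line, and not merely for $y$ large of one sign), and then checking that the positive-definiteness constant $\lambda $ genuinely dominates the resulting subquadratic remainder. Once this is in place, the boundedness argument, the extraction of the subsequence and the contradiction step are verbatim translations of Theorem \ref{dep_param_theo} into $R^{T}$.
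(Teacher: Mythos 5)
Your proposal follows essentially the same route as the paper: a uniform coercivity estimate $J_{u}\left( y\right) \geq \tfrac{1}{2}a_{M+Q}\left\vert y\right\vert ^{2}-T\left( AB+\tfrac{\varepsilon }{r}\left\vert y\right\vert ^{r}\right) -\left\vert y\right\vert \sqrt{\sum_{k=1}^{T}g^{2}\left( k\right) }$ obtained from the positive definiteness of $M+Q$ together with \textbf{A8} (the paper resolves the step you flag exactly as you anticipate, converting the $\limsup$ into the one-sided bound $F\left( k,y,u\right) \leq AB+\tfrac{\varepsilon }{r}\left\vert y\right\vert ^{r}$ with $A$ the supremum of $\left\vert f\right\vert $ on $\left[ 1,T\right] \times \left[ -B,B\right] \times \left[ -M,M\right] $), followed by nontriviality via $g\left( k_{1}\right) \neq 0$ and the boundedness, subsequence extraction and telescoping contradiction imported verbatim from Theorem \ref{dep_param_theo}. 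No substantive difference from the paper's argument.
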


\begin{proof}
First we must show that for any fixed $u\in L_{M}$ there exists a solution
to (\ref{emden})-(\ref{emden-bd}) and next we need to show that set $V_{u}$
is bounded uniformly in $u\in L_{M}$.

Let us fix $u\in L_{M}$. By Theorem 3.4 from \cite{HeWe} applied to our
functional we get the existence of at least one solution to (\ref{emden})-(%
\ref{emden-bd}). Indeed, we recall some arguments used in \cite{HeWe} for
convenience. Let us fix $u\in L_{M}$. Fix $\varepsilon >0$. By (\ref{inf}),
we see that there exists $B>0$ such that $\frac{f\left( k,y,u\right) }{%
\left| y\right| ^{r-1}}\leq \varepsilon $ for all $k\in \left[ 1,T\right] $
and for $\left| y\right| \geq B,$ $\left| u\right| \leq M$. Then it follows
that $F\left( k,y,u\right) \leq \frac{\varepsilon }{r}\left| y\right| ^{r}$
for all $k\in \left[ 1,T\right] $ and for $\left| y\right| \geq B,$ $\left|
u\right| \leq M$. Denoting $A=\sup_{\left( k,x,u\right) \in \left[ 1,T\right]
\times \left[ -B,B\right] \times \left[ -M,M\right] }\left| f\left(
k,x,u\right) \right| $ we see that for $\left( k,y,u\right) \in \left[ 1,T%
\right] \times R\times \left[ -M,M\right] $ by the definition of $F$ 
\begin{equation*}
F\left( k,y,u\right) \leq AB+\frac{\varepsilon }{r}\left| y\right| ^{r}.
\end{equation*}
Since $M+Q$ is positive definite there exists a number $a_{M+Q}>0$ such that
for all $y\in R^{T}$ 
\begin{equation*}
\left\langle \left( M+Q\right) y,y\right\rangle \geq a_{M+Q}\left| y\right|
^{2}
\end{equation*}
Therefore, we have by Schwartz inequality for any $y\in R$%
\begin{equation}
J_{u}\left( y\right) \geq \frac{1}{2}a_{M+Q}\left| y\right| ^{2}-T\left( AB+%
\frac{\varepsilon }{r}\left| y\right| ^{r}\right) -\left| y\right| \sqrt{%
\sum_{k=1}^{T}g^{2}\left( y\right) }.  \label{unifJu}
\end{equation}
Since $r<2$, we see that $J_{u}$ is coercive. Hence it has an argument of a
minimum $x$ which satisfies (\ref{emden})-(\ref{emden-bd}). We note that $%
x\neq 0$. Indeed, if $x=0,$ then $g\left( k_{1}\right) =0$, which is a
contradiction with \textbf{A12}.

Now we see that by inequality (\ref{unifJu}) we again have for the solution $%
x_{u}$ to (\ref{emden})-(\ref{emden-bd}) 
\begin{equation*}
\frac{1}{2}a_{M+Q}\left\vert x_{u}\right\vert ^{2}-T\frac{\varepsilon }{r}%
\left\vert x_{u}\right\vert ^{r}-\left\vert x_{u}\right\vert \sqrt{%
\sum_{k=1}^{T}g^{2}\left( y\right) }\leq J_{u}\left( x_{u}\right) \leq ABT.
\end{equation*}%
Thus the reasoning from the second part of the proof of Theorem \ref%
{dep_param_theo} now applies.
\end{proof}

We conclude the paper with some examples and remarks concerning the results
obtained in this work.

\begin{example}
Let $l$ be any natural number and let $q,r\in C\left( R,R_{+}\right) $ be
bounded. Function $f\left( k,x,u\right) =q\left( k\right) h\left( x\right)
r\left( u\right) $ with 
\begin{equation*}
h\left( x\right) =\left\{ 
\begin{array}{cc}
x^{2l}, & x\leq 0 \\ 
-x^{2l}, & x>0%
\end{array}
\right.
\end{equation*}
does not satisfy \textbf{A2}, but it satisfies \textbf{A4}.
\end{example}

\begin{example}
Let $q,r\in C\left( R,R_{+}\right) $, where $r$ is a bounded function.
Function $f\left( k,x,u\right) =q\left( k\right) h\left( x\right) r\left(
u\right) $ with 
\begin{equation*}
h\left( x\right) =\left\{ 
\begin{array}{cc}
-\frac{x+1}{1+x^{4}}, & x<0 \\ 
-1, & x\geq 0%
\end{array}%
\right.
\end{equation*}%
satisfies \textbf{A6}. Indeed, in this case 
\begin{equation*}
H\left( x\right) =\left\{ 
\begin{array}{cc}
\frac{1}{8}\sqrt{2}\ln \frac{x^{2}+x\sqrt{2}+1}{x^{2}-x\sqrt{2}+1}+\frac{1}{4%
}\sqrt{2}\arctan \left( x\sqrt{2}+1\right) +\frac{1}{4}\allowbreak \sqrt{2}%
\arctan \left( x\sqrt{2}-1\right) , & x<0 \\ 
-x, & x\geq 0%
\end{array}%
\right. .
\end{equation*}%
Hence \textbf{A6}\ can be directly verified. Taking $g\in C\left( R,\left(
-\infty ,-1\right) \right) $ we see that \textbf{A5} is also satisfied.
\end{example}

\end{document}